\def\BibTeX{{\rm B\kern-.05em{\sc i\kern-.025em b}\kern-.08em
    T\kern-.1667em\lower.7ex\hbox{E}\kern-.125emX}}
\newcommand{\defeq}{\coloneq}
\newtheorem{thm}{Theorem}
\newtheorem{defn}{Definition}
\newtheorem{lem}{Lemma}
\newtheorem{prop}{Proposition}
\def\real{\mathbb{R}}
\def\complex{\mathbb{C}}
\def\torus{\mathbb{T}}
\def\integer{\mathbb{Z}}
\def\df{\nabla\!f}
\def\dft{\nabla\!\tilde f}
\newcommand{\tp}{\mathsf{T}}
\def\d{\textrm{d}}
\newcommand{\bmatl}[2]{\renewcommand{\arraystretch}{1.2}\left[\begin{array}{#1}#2\end{array}\right]}
\newcommand{\bmat}[1]{\begin{bmatrix}#1\end{bmatrix}}
\renewcommand{\epsilon}{\varepsilon}
\newcommand{\norm}[1]{\left\|#1\right\|}
\def\F{\mathcal{F}}
\def\QmL{\mathcal{Q}_{m,L}}
\def\SmL{\mathcal{S}_{m,L}^1}
\def\SSmL{\mathcal{S}_{m,L}^2}
\def\FmL{\mathcal{F}_{m,L}}
\title{The Fastest Known First-Order Method for Minimizing Twice Continuously Differentiable Smooth Strongly Convex Functions}
\author{Bryan Van Scoy, \IEEEmembership{Member, IEEE}, and
        Laurent Lessard, \IEEEmembership{Senior Member, IEEE}%
\thanks{
This material is based upon work supported by the National Science Foundation under Grant No. 2347121.
}
\thanks{B.~Van~Scoy is with the Department of Electrical and Computer Engineering, Miami University, Oxford, OH 45056, USA. (e-mail: bvanscoy@miamioh.edu)}
\thanks{L.~Lessard is with the Department of Mechanical and Industrial Engineering, Northeastern University, Boston, MA 02115, USA. (e-mail: l.lessard@northeastern.edu)}}
\begin{document}

\pagestyle{empty}
\maketitle
\thispagestyle{empty}

\begin{abstract}
We consider iterative gradient-based optimization algorithms applied to functions that are smooth and strongly convex. The fastest globally convergent algorithm for this class of functions is the Triple Momentum (TM) method. We show that if the objective function is also twice continuously differentiable, a new, faster algorithm emerges, which we call $C^2$-Momentum (C2M). We prove that C2M is globally convergent and that its worst-case convergence rate is strictly faster than that of TM, with no additional computational cost. We validate our theoretical findings with numerical examples, demonstrating that C2M outperforms TM when the objective function is twice continuously differentiable.
\end{abstract}

\begin{IEEEkeywords}
Optimization algorithms, robust control
\end{IEEEkeywords}

\section{INTRODUCTION}\label{sec:intro}

\IEEEPARstart{W}{e consider} the well-studied optimization problem
\begin{equation}\label{eq:opt}
    \underset{x\in \real^d}{\text{minimize}}\; f(x)
\end{equation}
where $f:\real^d\to\real$ is continuously differentiable.
A popular approach to solving \eqref{eq:opt}, particularly when the dimension $d$ is large, is to use iterative gradient-based methods, such as Gradient Descent (GD) and its accelerated variants.

A central question in the study of iterative methods is that of \emph{worst-case convergence rate} over a class of functions $\F$. In this letter, we consider the \emph{root-convergence factor} (also known as geometric convergence rate), denoted ${\rho\in(0,1)}$, a notion we make precise in \cref{sec:alg_description}. Associated with the root-convergence factor are two important concepts:

\paragraph{Lower bounds} $\rho$ is a \emph{lower bound} for $\F$ if for any algorithm, there exists $f\in\F$ and an algorithm initialization such that the algorithm converges no faster than~$\rho$.

\paragraph{Upper bounds} $\rho$ is an \emph{upper bound} for $\F$ if there exists an algorithm such that for all $f\in\F$ and algorithm initializations, the algorithm converges at least as fast as $\rho$.

If $\F$ has matching lower and upper bounds, this $\rho$ and the corresponding algorithm that achieves it are said to be \emph{minimax optimal} for $\F$.

Generally, adding more structure to a function class, such as convexity or Lipschitz properties, makes the minimax rate faster because iterative algorithms can exploit the additional structure to converge more rapidly. We now provide a brief survey of different function classes and their minimax rates. The relationship between these classes is illustrated in the Venn diagram of \cref{fig:venn}.

\begin{figure}[ht]
    \centering
    \includegraphics{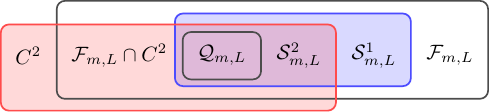}
    \caption{Venn diagram of different function classes. Blue region: strongly convex functions. Red region: twice continuously differentiable functions. This letter focuses on the shaded intersection of these sets, $\SSmL$.}
    \label{fig:venn}
\end{figure}

The class $\FmL$ consists of continuously differentiable functions with sector-bounded gradients. Specifically, there exists $x_\star\in\real^d$ (the optimal point) and constants $0<m\leq L$ such that
$\bigl(L(x-x_\star)-\df(x)\bigr)^\tp \bigl(\df(x)-m(x-x_\star)\bigr) \geq 0$ for all $x\in\real^d$. Functions in this class may be nonconvex but nevertheless have a unique local (and global) minimizer. The minimax rate for $\FmL$ is $\rho = \frac{\kappa-1}{\kappa+1}$ where $\kappa \defeq \frac{L}{m}$, and is achieved by GD with stepsize $\alpha=\frac{2}{L+m}$.

The class $\SmL$ consists of functions that have Lipschitz gradient with Lipschitz constant $L$ and are strongly convex with parameter $m$. The superscript ``1'' indicates that $f\in C^1$, which follows from Lipschitz gradients. One can show that $\SmL \subset \FmL$.
The minimax rate for $\SmL$ is $\rho = 1-\frac{1}{\sqrt{\kappa}}$, and was recently proved in \cite{drori_taylor_complexity} using an exact characterization of $\SmL$ via interpolation conditions and the Performance Estimation paradigm \cite{PEP}. The same lower bound was obtained in a parallel line of work by viewing algorithms as discrete-time Lur'e systems and applying integral quadratic constraints (IQCs) or dissipativity theory \cite{iqcopt,dissalg,scherer_optalg}. Specifically, the set $\SmL$ was over-approximated using Zames--Falb IQCs \cite{scherer_synthesis_siam,algosyn_acc}, leading to an upper bound that turned out to be exact.
The minimax rate for $\SmL$ is achieved by the Triple Momentum (TM) Method~\cite{tmm} and
the Information Theoretic Exact Method (ITEM)
\cite{ITEM}.

The class $\QmL \subset \SmL$ consists of quadratic functions of the form $f(x) = x^\tp Q x + p^\tp x + r$, with $m I_d \preceq Q \preceq L I_d$, and we have $\QmL\subset \SmL$.
The minimax rate for $\QmL$ is $\rho = \frac{\sqrt{\kappa}-1}{\sqrt{\kappa}+1}$. The lower bound was proved by Nemirovsky~\cite{nemirovsky1992information} and Nesterov \cite[\S2.1.4]{nesterov2018lectures}.
There are several minimax optimal methods for $\QmL$,
the simplest of which is  Polyak's Heavy Ball (HB) method \cite[\S3.2.1]{hb}.
Polyak used Lyapunov's indirect method to show that HB converges \emph{locally} for any
$f\in\SmL$ provided that $f$ is twice continuously differentiable ($f\in C^2$) in a neighborhood of the optimal point. In other words, HB converges on $\SmL$ 
when initialized sufficiently close to the optimal point and enjoys the same fast rate as for $\QmL$!
If incorrectly initialized, HB need not converge at all on $\SmL$ \cite{tmm,iqcopt}.

The aforementioned minimax optimal algorithms are described in \cref{sec:algform} and summarized in \cref{table:params}.

Polyak's observation raises an interesting possibility, which forms the starting point for the present work. If we consider the function class $\SSmL \defeq \SmL \cap C^2$, then by Lyapunov's indirect method, any globally convergent method will converge at its \emph{local rate}, which may be faster than the minimax rate of $\SmL$.
This function class satisfies
$\QmL \subset \SSmL \subset \SmL$ and may be characterized succinctly as functions whose Hessians satisfy $mI_d \preceq \nabla^2 f(x) \preceq LI_d$. Functions of interest in this category include
regularized logistic loss,
exponential family negative log-likelihoods with bounded natural parameters,
and Moreau envelope smoothing of any $f\in \SmL$.

Our main result is a new algorithm, $C^2$-Momentum (C2M). We show that C2M achieves
an upper bound of
$\max\left\{\frac{\sqrt{\kappa}-1}{\sqrt{\kappa}+1},\rho_\text{C2M}\right\}$ on $\SSmL$, where $\rho_\text{C2M}<1-\sqrt{\frac{2}{\kappa}}$.
This corresponds to an iteration complexity that is faster than the minimax rate of $\SmL$ by a factor of $\sqrt{2}$.

Notable related works are the recent papers \cite{Petersen1,Petersen2}, which use the same idea of optimizing the local convergence rate while enforcing global convergence. Specifically, these works develop re-tunings of HB and TM that converge globally on $\FmL$ but have optimized local rates because they also assume $f\in C^2$ locally near the optimal point.

The rest of this letter is organized as follows. In \cref{sec:alg_description} we describe C2M, in \cref{sec:convergence} we prove convergence results, in \cref{sec:numerical} we present some numerical results, and in \cref{sec:discussion} we discuss implications and future directions.

\section{MAIN RESULT}\label{sec:alg_description}

In this section, we describe our proposed algorithm, state its main convergence result, and use root locus arguments to provide intuition behind the algorithm parameters.

\subsection{Algorithm Form}\label{sec:algform}

We consider iterative first-order algorithms parameterized by $\alpha,\beta,\eta\in\real$ of the form
\begin{subequations}\label{eq:system}
    \begin{align}
        y_k &= x_k + \eta\,(x_k - x_{k-1}) \\
        u_k &= \df(y_k) \\
        x_{k+1} &= x_k + \beta\,(x_k-x_{k-1}) - \alpha\,u_k
    \end{align}
\end{subequations}
for $k\geq 0$ with initial conditions $x_0,x_{-1}\in\real^d$. We can interpret such an algorithm as a linear time-invariant (LTI) system $G$ in feedback with the gradient $\df$, where the transfer function\footnote{As a slight abuse of notation, we use the same symbol to refer to both an LTI system and its transfer function.} from the gradient $u_k$ to the point $y_k$ at which the gradient is evaluated is
\begin{equation}\label{eq:tf}
    G(z) = g(z) I_d \quad\text{where}\quad
    g(z) = -\alpha\frac{(1+\eta)z - \eta}{(z-1)(z-\beta)}.
\end{equation}
A minimal state-space realization of the reduced system $g$ is
\begin{equation}\label{eq:ss}
    \bmatl{c|c}{A & B \\ \hline C & 0} = \bmatl{cc|c}{1+\beta & -\beta & -\alpha \\ 1 & 0 & 0 \\ \hline 1+\eta & -\eta & 0}.
\end{equation}

Despite its simplicity, the form \eqref{eq:system} can represent \emph{all} algorithms referenced in \cref{sec:intro} when $\alpha,\beta,\eta$ are suitably chosen (GD, HB, TM, ITEM, GHB, GAG). \cref{table:params} shows parameters for the minimax methods discussed in \cref{sec:intro}.

\begin{table}[ht]
    \centering
    \renewcommand{\arraystretch}{1.5}
    \addtolength{\tabcolsep}{-1pt}
    \caption{Minimax-optimal methods for several function classes.}
    \label{table:params}
    \begin{tabular}{cccccc}
    \toprule
    \makecell{Function\\class} & \makecell{Minimax\\method} & $\alpha$ & $\beta$ & $\eta$ & \makecell{Minimax\\rate $\rho$} \\
    \midrule
    $\FmL$ & GD & $\frac{1-\rho}{m}$ & 0 & 0 & $\frac{\kappa-1}{\kappa+1}$ \\
    $\SmL$ & TM \cite{tmm} & $\frac{1+\rho}{L}$ & $\frac{\rho^2}{2-\rho}$ & $\frac{\rho^2}{(1+\rho)(2-\rho)}$ & $1-\frac{1}{\sqrt{\kappa}}$ \\
    $\QmL$ & HB \cite{hb} & $\frac{(1-\rho)^2}{m}$ & $\rho^2$ & 0 & $\frac{\sqrt{\kappa}-1}{\sqrt{\kappa}+1}$ \\
    \bottomrule
    \end{tabular}
\end{table}

\subsection{\texorpdfstring{$C^2$-Momentum}{C2M}}

\begin{defn}[C2M]
    Given parameters $m,L,\rho\in\real$ with $0<m\leq L$, $\kappa\defeq \frac{L}{m}$, and $\rho\in(0,1)$, the $C^2$-Momentum (C2M) algorithm is of the form \eqref{eq:system} with parameters
    \begin{equation}\label{eq:stepsizes}
        \begin{gathered}
        \alpha = \tfrac{(1-\rho)^2}{m}, \qquad
        \beta = \tfrac{\rho}{\kappa-1} \bigl(1 - \tfrac{\kappa\,(1 - 3\rho)}{1+\rho}\bigr), \\ 
        \eta = \tfrac{\rho}{\kappa-1}\bigl(\tfrac{1+\rho}{(1-\rho)^2} - \tfrac{\kappa}{1+\rho}\bigr).
        \end{gathered}
    \end{equation}
\end{defn}
\medskip

The C2M parameters depend on $\rho$, which we choose based on the condition number $\kappa$ of the objective function:
\begin{equation}\label{eq:rho_def}
    \begin{cases}
        \rho = \frac{\sqrt{\kappa}-1}{\sqrt{\kappa}+1} & \text{if }\kappa < 9+4\sqrt{5} \\[1.2mm]
        \rho \in \Bigl(\rho_\text{C2M},1-\sqrt{\frac{2}{\kappa}}\,\Bigr] & \text{if }\kappa\geq 9+4\sqrt{5}
    \end{cases}
\end{equation}
where $\rho_\text{C2M}$ is the smallest positive root of the polynomial
\begin{multline}\label{bigpoly}
    p(\kappa,\rho) \defeq 8 \kappa  (\kappa +1) \rho ^7
    -(23 \kappa ^2+18 \kappa +7) \rho ^6 \\
    +2 (5 \kappa ^2-14 \kappa -7) \rho ^5
    +(31 \kappa ^2+50 \kappa +15) \rho ^4 \\
    -4 (11 \kappa ^2-4 \kappa -11) \rho ^3
    +(23 \kappa ^2-30 \kappa +23) \rho ^2 \\
    -2 (\kappa -1) (3 \kappa +1) \rho
    +(\kappa -1)^2.
\end{multline}
When $\kappa < 9+4\sqrt{5}$, the parameters of C2M reduce to those of HB \cite{hb} in \Cref{table:params}.
For $\kappa\geq 9+4\sqrt{5}$, we in general want to pick $\rho$ as small as possible, but we will see that proving global asymptotic stability requires a strict inequality, so in practice we can choose $\rho = \rho_\text{C2M} + \epsilon$ for some small $\epsilon > 0$.
The C2M stepsizes are defined in terms of the root $\rho_\text{C2M}$ of the polynomial $p(\kappa,\rho)$ in~\eqref{bigpoly}. The following result (i) shows that this quantity is well defined in that the polynomial does have a positive root, and (ii) provides bounds on this root that will be used in the analysis. The proof is in \cref{app:sturm}.

\begin{lem}\label{lem:sturm}
    Suppose $\kappa \geq 9+4\sqrt{5}$. The polynomial $p(\kappa,\rho)$ defined in \eqref{bigpoly} has exactly one real root $\rho_\text{C2M}$ in the open interval $\Bigl(\frac{\sqrt{\kappa}-1}{\sqrt{\kappa}+1}, 1-\sqrt{\frac{2}{\kappa}}\Bigr)$. Moreover, $\rho_\text{C2M}$ is the smallest positive root and $p(\kappa,\rho) < 0$ for all $\rho\in\Bigl(\rho_\text{C2M},1-\sqrt{\frac{2}{\kappa}}\,\Bigr]$.
\end{lem}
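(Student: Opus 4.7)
The overall plan is to combine the intermediate value theorem (for existence of a root in the claimed interval) with a Sturm-sequence argument (for uniqueness and the smallest-positive-root statement), treating $p(\kappa,\rho)$ as a degree-$7$ polynomial in $\rho$ whose coefficients are polynomial in $\kappa$. The hypothesis $\kappa \geq 9+4\sqrt{5}$ becomes cleaner in the variable $s \defeq \sqrt{\kappa}$, since $9+4\sqrt{5} = (\sqrt{5}+2)^2$ turns the hypothesis into $s \geq \sqrt{5}+2$.

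First I would evaluate $p$ at the two endpoints $\rho_\text{low} = \frac{s-1}{s+1}$ and $\rho_\text{high} = 1-\sqrt{2/\kappa}$. Substituting $\rho_\text{low}$ and clearing the factor $(s+1)^7$ yields a polynomial in $s$ whose sign I would pin down by shifting $u \defeq s - (\sqrt{5}+2) \geq 0$ and checking that every coefficient in $u$ is nonnegative --- a finite, mechanical check. The evaluation at $\rho_\text{high}$ is handled analogously after multiplying by $\kappa^{7/2}$; I expect it to be nonpositive and in fact to vanish precisely at $\kappa = 9+4\sqrt{5}$, which explains why this particular threshold appears in the first place. With opposite signs at the two endpoints, the intermediate value theorem produces at least one real root in the open interval $(\rho_\text{low},\rho_\text{high})$.

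Next I would handle uniqueness in $(\rho_\text{low},\rho_\text{high})$ and the smallest-positive-root claim. For uniqueness, I would form a Sturm chain for $p(\kappa,\cdot)$, evaluate its elements at the two endpoints, and show that the number of sign variations drops by exactly one; each required sign determination reduces to a further polynomial inequality in $\kappa$ dischargeable by the same shift-and-check technique. For the smallest-positive-root statement, I would note that $p(\kappa,0) = (\kappa-1)^2 > 0$ and extend the Sturm argument (or use a cheaper coarser bound such as Cauchy's root bound on the reversed polynomial) to show that no root of $p(\kappa,\cdot)$ lies in $(0,\rho_\text{low}]$. Finally, since $\rho_\text{C2M}$ is then the only root of $p(\kappa,\cdot)$ in $(0,\rho_\text{high}]$ and $p(\kappa,\rho_\text{high}) \leq 0$, continuity immediately gives $p(\kappa,\rho) < 0$ on $(\rho_\text{C2M},\rho_\text{high}]$ as claimed.

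The main obstacle is not conceptual but rather the bookkeeping: the Sturm chain and the endpoint evaluations each yield polynomials in $\kappa$ (or $s$) of moderate degree that must be shown to have definite sign throughout $s \geq \sqrt{5}+2$. There are several such inequalities and the polynomials are unwieldy, but each one succumbs to the same template of shifting the variable to the threshold $s = \sqrt{5}+2$ and inspecting coefficient signs, reducing the entire argument to a finite list of rational-number sign checks.
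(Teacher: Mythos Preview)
Your plan is sound and uses the same two ingredients as the paper---Sturm's theorem for a root count and the intermediate value theorem for localization---but the paper organizes the work more economically. Rather than evaluating the Sturm chain at the algebraic endpoints $\rho_\text{low}=\frac{\sqrt{\kappa}-1}{\sqrt{\kappa}+1}$ and $\rho_\text{high}=1-\sqrt{2/\kappa}$, the paper evaluates it at $\rho=0$ and $\rho=1$, obtaining $5$ and $3$ sign changes respectively, hence exactly two real roots in $(0,1)$. It then checks only the signs of $p$ itself (not the whole chain) at the three points $\rho_\text{low}$, $\rho_\text{high}$, and $1$, finding $+,-,+$; the intermediate value theorem then pins one root in each of $(\rho_\text{low},\rho_\text{high})$ and $(\rho_\text{high},1)$, which simultaneously gives existence, uniqueness in the target interval, the smallest-positive-root claim, and the negativity on $(\rho_\text{C2M},\rho_\text{high}]$. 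Your route would work, but substituting $\rho_\text{low}$ and $\rho_\text{high}$ into every element of the Sturm chain produces expressions in $\sqrt{\kappa}$ layered on top of the rational-in-$\kappa$ remainder coefficients, so each of the many sign checks is substantially heavier than the paper's.

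One small caution: your heuristic that $p(\kappa,\rho_\text{high})$ vanishes precisely at $\kappa=9+4\sqrt{5}$ is likely the wrong endpoint. The case split in the definition of the algorithm uses $\rho=\rho_\text{low}$ below the threshold and $\rho\in(\rho_\text{C2M},\rho_\text{high}]$ above it, which is consistent with $\rho_\text{C2M}$ meeting $\rho_\text{low}$ (i.e., $p(\kappa,\rho_\text{low})=0$) at the threshold, not $\rho_\text{high}$. This does not affect the validity of your plan, but it may save you from chasing a factorization that is not there.
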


\subsection{Main Result}

To describe our main result, we first define the root-convergence factor of an algorithm, which is a way to characterize its rate of convergence; see \cite[\S9.2]{ortega1970}.

\begin{defn}\label{def:r_conv}
    Let $\{x_k\}$ be a sequence that converges to a point $x_\star$. Then, the root-convergence factor of $\{x_k\}$ is
    \[
        \rho = \limsup_{k\to\infty} \, \norm{x_k-x_\star}^{1/k}.
    \]
    Moreover, the worst-case root-convergence factor of an algorithm \eqref{eq:system} over a function class $\F$ is the supremum of the root-convergence factors over all sequences produced by the algorithm when applied to a function $f\in\F$.
\end{defn}

We now state our main convergence result for C2M over the class $\SSmL$. A full proof is included in \cref{sec:convergence}.

\begin{thm}[Upper bound for C2M]\label{thm:c2m}
    Consider the C2M method defined in \eqref{eq:stepsizes} with parameter $\rho$ chosen according to \eqref{eq:rho_def}. 
    An upper bound for the worst-case root-convergence factor of C2M over the function class $\SSmL$ is $\rho$.
\end{thm}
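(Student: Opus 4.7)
The plan is to split the proof into two pieces: (i) establishing global asymptotic convergence of the C2M iterates to $x_\star$ on $\SSmL$, and (ii) bounding the local rate of the linearization at $x_\star$ by $\rho$. Because the root-convergence factor in \cref{def:r_conv} is a $\limsup$, initial transients are irrelevant, and once (i) is in hand the worst-case rate equals the spectral radius of the closed-loop linearization at $x_\star$. Combining (i) with (ii) then delivers the theorem via Lyapunov's indirect method.

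For the local rate in step (ii), I would linearize \eqref{eq:system} at $x_\star$ using $H \defeq \nabla^2 f(x_\star)$, which satisfies $mI_d \preceq H \preceq LI_d$. The augmented closed-loop state matrix decouples along the eigenvectors of $H$, so it suffices to show that for every $q\in[m,L]$ the two roots of
\[
    z^2 - \bigl((1+\beta) - \alpha(1+\eta)q\bigr)\,z + (\beta - \alpha\eta\,q) = 0
\]
satisfy $|z|\le\rho$. Applying the Schur--Cohn test to the scaled polynomial $z\mapsto z/\rho$ yields three inequalities that are linear in $q$ and hence reduce to checks at the endpoints $q\in\{m,L\}$. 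Substituting the C2M parameters \eqref{eq:stepsizes} and eliminating identifies $p(\kappa,\rho)$ of \eqref{bigpoly} as the binding condition, and \cref{lem:sturm} certifies that the choice of $\rho$ in \eqref{eq:rho_def} places all closed-loop poles in the disk of radius $\rho$, uniformly in $q$. For $\kappa<9+4\sqrt{5}$ the parameters collapse to HB, and the classical HB pole analysis applies verbatim.

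For the global step (i), I would construct a Lyapunov/dissipativity certificate in the IQC framework of \cite{iqcopt,dissalg,scherer_optalg,algosyn_acc}. Since $\SSmL\subset\SmL$, the Zames--Falb multipliers and interpolation inequalities used for $\SmL$ remain valid, and the goal is a quadratic storage function $V$ on the augmented state satisfying $V(\xi_{k+1})\le\gamma^2 V(\xi_k)$ along trajectories for some $\gamma<1$. The LMI encoding this dissipation inequality is parametrized by $(\kappa,\rho)$, and its feasibility boundary is captured precisely by $p(\kappa,\rho)$, which is why the same polynomial governs \eqref{eq:rho_def}.

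The main obstacle is step (i): HB attains the same local rate as C2M yet diverges on some $f\in\SmL$, so the global certificate must exploit structure that C2M's retuning possesses and HB lacks. Finding the correct multiplier and verifying LMI feasibility uniformly over $\kappa\ge 9+4\sqrt{5}$ is the technically delicate part, and the need for a strict inequality $\rho>\rho_\text{C2M}$ in \eqref{eq:rho_def} likely reflects an open boundary of this feasibility region. The local-rate computation in step (ii) is conceptually routine but algebraically heavy, which is why the binding polynomial $p(\kappa,\rho)$ has degree $7$ in $\rho$ and is handled indirectly via the Sturm-type argument of \cref{lem:sturm}.
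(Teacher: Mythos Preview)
Your two-step architecture---global asymptotic stability plus a local spectral-radius bound, glued together by Lyapunov's indirect method---is exactly the paper's strategy, and your step~(ii) via the Jury/Schur--Cohn test on the $\rho$-scaled characteristic polynomial is precisely what the paper does. However, you have the bookkeeping backwards on where $p(\kappa,\rho)$ enters. The polynomial in \eqref{bigpoly} is \emph{not} the binding condition for the local rate. When you actually carry out the Jury test with the C2M parameters substituted, the four inequalities collapse to the elementary requirement $\tfrac{\sqrt{\kappa}-1}{\sqrt{\kappa}+1}\le\rho\le\tfrac{\kappa-1}{\kappa+1}$, which holds for every $\rho$ in \eqref{eq:rho_def} without ever invoking $p$ or \cref{lem:sturm}. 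The degree-$7$ polynomial arises in step~(i), not step~(ii): the paper verifies global stability by checking the frequency-domain inequality of \cref{thm:fdi} with the single Zames--Falb multiplier $h(z)=z^{-1}$, parametrizes the unit circle as $z=x+i\sqrt{1-x^2}$, and obtains a concave quadratic in $x\in[-1,1]$ whose maximum has numerator proportional to $p(\kappa,\rho)$. \Cref{lem:sturm} then certifies $p<0$ on the interval $(\rho_\text{C2M},\,1-\sqrt{2/\kappa}\,]$, which is the true source of the strict inequality $\rho>\rho_\text{C2M}$ in \eqref{eq:rho_def}.

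A second, smaller discrepancy: your proposed route for step~(i)---an LMI giving $V(\xi_{k+1})\le\gamma^2 V(\xi_k)$ for some $\gamma<1$---is stronger than what the paper proves and would be harder to close analytically in closed form. The paper only extracts \emph{asymptotic} stability (no global rate) from the FDI, then relies entirely on the linear convergence theorem \cite[Thm.~10.1.4]{ortega1970} to inherit the root-convergence factor $\rho$ from the linearization. The concrete choice $h(z)=z^{-1}$ is what reduces the FDI to a tractable scalar quadratic; a generic multiplier or LMI search would not yield the explicit polynomial $p$ that makes the argument go through symbolically.
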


\subsection{Root Locus Interpretation}\label{sec:rootlocus}

\begin{figure}[t]
    \centering
    \includegraphics{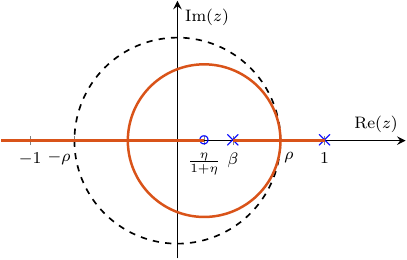}
    \caption{Root locus of C2M. The locus has a double root at $z=\rho$ at gain $m$ and a single root at $z=-\rho$ at gain $L$.}
    \label{fig:rootlocus}
\end{figure}

Before rigorously analyzing the convergence of C2M, we first provide intuition behind the C2M parameters \eqref{eq:stepsizes} using a root locus argument.

Consider the general algorithm \eqref{eq:system} applied to a function $f\in\QmL \subset \SSmL$ with Hessian $Q$. By diagonalizing the Hessian, the iterates separate into $d$ decoupled systems, each in (positive) feedback with an eigenvalue $q_i$ of $Q$. Since the objective function is $L$-smooth, $m$-strongly convex, and twice continuously differentiable, its Hessian has eigenvalues in the interval $[m,L]$. Therefore, we can study worst-case local convergence by analyzing the eigenvalues of $A + q B C$ for $q\in[m,L]$. These closed-loop eigenvalues are solutions of the root locus $0 = 1 - q\,g(z)$ for $q\in[m,L]$. The parameters of C2M are the solutions to the following conditions:
\begin{enumerate}
    \item The root locus passes through $z=-\rho$ when $q=L$.
    \item The root locus has a double root at $z=\rho$ when $q=m$.
\end{enumerate}
The visual reasoning for these two conditions is illustrated in \cref{fig:rootlocus}, which shows the root locus of $1-q\,g(z)$ as $q$ varies. As $q\to 0$, the roots are the poles of $g(z)$, which are $\beta$ and~$1$. These roots meet at $z=\rho$, circle around the zero at $z = \tfrac{\eta}{1+\eta}$, then break in on the negative real axis, with one root converging to the zero and the other going to $-\infty$ along the real axis. By enforcing the above two conditions, the root locus remains entirely inside the $\rho$-circle for all $q\in[m,L]$. In terms of the transfer function, these conditions are that
\begin{align}\label{eq:rootlocus-conditions}
    L\,g(-\rho) &= 1, &
    m\,g(\rho) &= 1, &
    \left.\frac{\d g(z)}{\d z} \right|_{z=\rho} \!\!\!= 0,
\end{align}
where the last two equations are for the double root. Straightforward calculations show that the parameters \eqref{eq:stepsizes} for C2M are the unique solution to the equations \eqref{eq:rootlocus-conditions}.

\section{CONVERGENCE ANALYSIS}\label{sec:convergence}

We now prove the main convergence result for C2M from \cref{thm:c2m}. Our proof consists of two steps. First, we show that the algorithm is globally asymptotically stable, meaning that the iterates converge to the minimizer of $f$ for all initial conditions. Once we have global convergence, we then show that the worst-case root-convergence factor is $\rho$ by analyzing the linearization of the algorithm about its equilibrium.

\subsection{Global Stability via Frequency-Domain Analysis}

It is convenient to shift the dynamics of the algorithm \eqref{eq:system} about its optimal point $x_\star$, which satisfies $\df(x_\star)=0$. To this effect, define $\tilde x_k \defeq x_k-x_\star$, $\tilde y_k \defeq y_k-x_\star$, $\tilde u_k = u_k$, and $\tilde f(y) \defeq f(y+x_\star)$. Then, we can rewrite \eqref{eq:system} as:
\begin{subequations}\label{eq:system_shifted}
    \begin{align}
        \tilde y_k &= \tilde x_k + \eta\,(\tilde x_k - \tilde x_{k-1}) \\
        \tilde u_k &= \dft(\tilde y_k) \\
        \tilde x_{k+1} &= \tilde x_k + \beta\,(\tilde x_k-\tilde x_{k-1}) - \alpha\,\tilde u_k
    \end{align}
\end{subequations}
Convergence of the algorithm $G$ applied to $f$ is therefore equivalent to convergence of $G$ applied to $\tilde f$. In other words, we may assume without loss of generality that $x_\star=0$.

To verify global asymptotic stability, we use integral quadratic constraints (IQCs)~\cite{megretski_rantzer}. In discrete time, these are defined as follows (see~\cite{fetzer2017absolute}), where $\ell_2^{n}$ denotes the space of square-summable sequences on $\real^n$.

\begin{defn}
    Signals $y\in\ell_2^{n_y}$ and $u\in\ell_2^{n_u}$ with associated $z$-transforms $\hat y(z)$ and $\hat u(z)$ satisfy the IQC defined by a
    measurable, bounded, and Hermitian matrix-valued 
    function $\Pi : \torus\to\complex^{(n_y+n_u)\times(n_y+n_u)}$ if
    \begin{equation}\label{iqc}
        \int_\torus \bmat{\hat y(z) \\ \hat u(z)}^* \Pi(z) \bmat{\hat y(z) \\ \hat u(z)} \d z \geq 0,
    \end{equation}
    where $\torus \defeq \left\{ z \in \complex \;\middle|\; |z|=1\right\}$ is the unit circle in the complex plane. A bounded operator $\Delta : \ell_2^{n_y}\to\ell_2^{n_u}$ satisfies the IQC defined by $\Pi$ if \eqref{iqc} holds for all $y\in\ell_2^{n_y}$ with $u = \Delta(y)$.
\end{defn}

It is well known (see for example \cite{zames-falb-qp,fetzer2017absolute}) that the gradient of a smooth strongly convex function can be described using IQCs.

\begin{prop}
    The operator $\Delta : \ell_2^d \to \ell_2^d$ defined by $(\Delta(y))_k \defeq \dft(y_k)$ for all $k\geq0$ and $y\in\ell_2^d$, where $\tilde f\in\SmL$ and $\dft(0) = 0$ satisfies the O'Shea--Zames--Falb IQC $\Pi_{m,L}\otimes I_d$, where\looseness=-1
    \[
        \Pi_{m,L} \defeq \bmat{-mL(2-h-h^*) & \!\! L(1-h^*) + m(1-h) \\ L(1-h) + m(1-h^*)\!\! & -(2-h-h^*)}
    \]
    and $h(z)$ is any transfer function with impulse response $\{h_k\}$ satisfying $\norm{h}_1 = \sum_{k=-\infty}^\infty |h_k| \leq 1$ and $h_k\geq 0$ for all $k$.
\end{prop}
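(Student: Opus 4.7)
The plan is to prove this classical O'Shea--Zames--Falb-style IQC by exploiting the monotone-operator structure of $\dft$ for $\tilde f\in\SmL$.

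The key observation is that $\dft$ admits a natural decomposition into two maximal monotone pieces. Since $\tilde f\in\SmL$ with $\dft(0)=0$, both
\[
\phi_1(y) := \dft(y) - my, \qquad \phi_2(y) := Ly - \dft(y)
\]
are gradients of convex $(L-m)$-smooth functions (namely $\tilde f - \tfrac{m}{2}\|y\|^2$ and $\tfrac{L}{2}\|y\|^2-\tilde f$), hence are maximal monotone operators vanishing at the origin. They satisfy $\phi_1+\phi_2=(L-m)I$, and by Baillon--Haddad they obey the pointwise cocoercivity $\phi_1(y)^{\tp}\phi_2(y)\ge 0$ (the vector form of the slope restriction $\dft\in[m,L]$).

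Next, I would translate the IQC into the time domain using the rank-one factorization
\[
\Pi_{m,L} = (1-h)\bmat{-mL & m \\ L & -1} + (1-h^*)\bmat{-mL & L \\ m & -1},
\]
which one can verify entry-by-entry. Parseval's identity then shows that \eqref{iqc} is equivalent, up to a positive constant, to
\[
\sum_{k\in\integer} \phi_1(y_k)^{\tp}\bigl[\phi_2(y_k)-(h*\phi_2(y))_k\bigr]\ge 0.
\]
For $h=0$ this reduces to the pointwise sector bound $\phi_1^{\tp}\phi_2\ge 0$; the substantive content of the proposition is controlling the convolution term for a general (possibly non-symmetric) Zames--Falb impulse response.

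The main obstacle is this last step. It is handled by the classical discrete-time Zames--Falb lemma (proved in the cited references), which asserts that for any maximal monotone $\phi$ with $\phi(0)=0$ and any $\{h_k\}$ satisfying $h_k\ge 0$ and $\|h\|_1\le 1$,
\[
\sum_k \phi(y_k)^{\tp}\bigl(y_k-(h*y)_k\bigr)\ge 0 \qquad \text{for all } y\in\ell_2^d.
\]
The proof of this lemma rests on the Rockafellar-type pairing $\phi(a)^{\tp}a+\phi(b)^{\tp}b\ge\phi(a)^{\tp}b+\phi(b)^{\tp}a$ (obtained from monotonicity) together with the non-negativity of $\{h_k\}$, which ensures that all convolutional cross-terms contribute with the correct sign. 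I would apply the lemma to $\phi_1$ and use the algebraic identity $h*\phi_2(y)=(L-m)(h*y)-h*\phi_1(y)$, together with Baillon--Haddad cocoercivity of $\phi_1$, to assemble the pieces into the required time-domain inequality. The vector statement with $\Pi_{m,L}\otimes I_d$ then follows immediately from the scalar case because the multiplier is block-diagonal.
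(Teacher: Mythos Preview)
The paper does not prove this proposition; it is stated as a known result with citations to the Zames--Falb literature, so there is no in-paper argument to compare against. Your sketch is an attempt to supply what the paper omits, and the reduction part is sound: the rank-one factorization of $\Pi_{m,L}$ checks out, and Parseval correctly converts the IQC into the time-domain inequality
\[
\sum_{k}\phi_1(y_k)^{\tp}\bigl[\phi_2(y_k)-(h*\phi_2(y))_k\bigr]\ge 0,
\]
with $\phi_1=\dft-mI$ and $\phi_2=LI-\dft$.

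The gap is in your final ``assembly'' step. Writing $\phi_2=(L-m)I-\phi_1$ splits the target into
\[
(L-m)\underbrace{\sum_k\phi_1(y_k)^{\tp}\bigl[y_k-(h*y)_k\bigr]}_{A\ \ge\ 0}
\;-\;
\underbrace{\sum_k\phi_1(y_k)^{\tp}\bigl[\phi_1(y_k)-(h*\phi_1(y))_k\bigr]}_{B\ \ge\ 0},
\]
and the monotone Zames--Falb lemma you invoke only yields $A\ge 0$; it gives no control of $(L-m)A-B$. Saying ``Baillon--Haddad cocoercivity'' is the right instinct but you have not shown how it closes this sign problem. (A related imprecision: for noncausal $h$ and vector-valued nonlinearities, the Zames--Falb lemma genuinely needs the potential/gradient structure, not merely maximal monotonicity as you state it.)

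The clean fix uses cocoercivity in its integrated (descent-lemma) form. Since $F_1=\tilde f-\tfrac{m}{2}\|\cdot\|^2$ is convex and $(L-m)$-smooth,
\[
F_1(b)\ge F_1(a)+\phi_1(a)^{\tp}(b-a)+\tfrac{1}{2(L-m)}\|\phi_1(b)-\phi_1(a)\|^2,
\]
which rearranges to
\[
\phi_1(a)^{\tp}\phi_2(b)\le \phi_1(a)^{\tp}\phi_2(a)+(L-m)\bigl[F_1(b)-F_1(a)\bigr]+\tfrac{1}{2}\bigl[\|\phi_1(a)\|^2-\|\phi_1(b)\|^2\bigr].
\]
With $a=y_k$, $b=y_{k-j}$, the bracketed terms telescope over $k$, giving $\sum_k\phi_1(y_k)^{\tp}\phi_2(y_{k-j})\le\sum_k\phi_1(y_k)^{\tp}\phi_2(y_k)$ for each $j$; then multiply by $h_j\ge 0$, sum, and use $\|h\|_1\le 1$ with $\phi_1^{\tp}\phi_2\ge 0$. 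This is the argument found in the cited references; your outline is on the right track but needs this telescoping step in place of the two-term split.
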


While $\Delta$ satisfies the IQC $\Pi_{m,L}\otimes I_d$, to analyze the interconnection of the algorithm $G$ with $\Delta$ using the main IQC theorem (see \cref{thm:iqc}), we will first need to perform a loop transformation (see, e.g., \cite[\S 6.6]{khalil}) so that
the zero operator is contained in the class of transformed uncertainties.
Doing so, the feedback interconnection of $G$ and $\Delta$ is equivalent to the feedback interconnection of $\tilde G$ and $\tilde \Delta$, where
\[
    \tilde G(z) = \tilde g(z)\otimes I_d \quad\text{where}\quad
    \tilde g(z) = \frac{\frac{L-m}{2} g(z)}{1 - \frac{L+m}{2} g(z)}
\]
and the transformed operator is given by
\[
    \tilde \Delta(x) = \tfrac{2}{L-m} \bigl(\Delta(x) - \tfrac{L+m}{2}x\bigr).
\]

Using properties of shifting and scaling the gradient of smooth strongly convex functions~\cite[\S 2.4]{PEP},
$\tilde \Delta$ satisfies the IQC $\Pi_{-1,1}\otimes I_d$ if and only if $\Delta$ satisfies the IQC $\Pi_{m,L}\otimes I_d$.
We are now ready to apply the following main IQC result.

\def\citeFetzer{\cite[Thm.~2]{fetzer2017absolute}}
\begin{prop}[Discrete-time IQC result \citeFetzer]\label{thm:iqc}
    Fix $\rho\in(0,1)$. Suppose that $\tilde G$ is stable, $\tilde \Delta$ is a bounded causal operator, and
    \begin{enumerate}[(i)]
        \item the interconnection of $\tilde G$ and $\tilde \Delta$ is well-posed,
        \item for every $\tau\in[0,1]$, $\tau\tilde \Delta$ satisfies the IQC $\Pi$, and
        \item the following frequency-domain inequality holds:
        \[
            \bmat{\tilde G(z) \\ I}^* \Pi(z) \bmat{\tilde G(z) \\ I} < 0 \quad \text{for all }z\in\torus.
        \]
    \end{enumerate}
    Then the feedback interconnection of $\tilde G$ and $\tilde \Delta$ is stable.
\end{prop}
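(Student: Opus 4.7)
The plan is to follow the classical homotopy argument used in IQC stability proofs. Since this statement is cited verbatim from Fetzer--Scherer, the authors will invoke it rather than reprove it, but the underlying proof proceeds as follows. Interpolate between the trivial uncertainty and $\tilde\Delta$ via the one-parameter family of feedback interconnections of $\tilde G$ with $\tau\tilde\Delta$ for $\tau\in[0,1]$, and propagate stability from $\tau=0$ (where the loop reduces to $\tilde G$ alone, stable by assumption) up to $\tau=1$.

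Concretely, I would define $S\defeq\{\tau\in[0,1] : (I-\tilde G\,(\tau\tilde\Delta))^{-1}\text{ extends to a bounded operator on }\ell_2\}$. Well-posedness (i) ensures the inverse exists at each $\tau\in[0,1]$ as a causal operator. The goal is to show $S$ is both open and closed in $[0,1]$; since $[0,1]$ is connected and $0\in S$, this forces $S=[0,1]$ and hence $1\in S$, giving the conclusion. Openness of $S$ at any $\tau_0\in S$ follows from a local small-gain / Neumann-series argument, because the strict frequency-domain inequality (iii) provides a uniform quadratic margin that survives small perturbations in $\tau$.

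The hard step is closedness. Given $\tau_*$ in the closure of $S$, take $\tau_n\to\tau_*$ in $S$ with trajectories $(y_n, u_n=\tau_n\tilde\Delta(y_n))$ driven by a bounded external input. Condition (iii) provides some $\epsilon>0$ such that
\[
    \bmat{\tilde G(z) \\ I}^{*}\Pi(z)\bmat{\tilde G(z) \\ I}\preceq -\epsilon\,I
    \quad \text{for all }z\in\torus,
\]
which, paired with the IQC from (ii) applied to each $\tau_n\tilde\Delta$, yields via Parseval a uniform $\ell_2$-gain estimate on the closed loop independent of $n$. Passing to the limit preserves the bound, so $\tau_*\in S$. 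The principal technical obstacle is that the IQC in (ii) is defined only on full $\ell_2$ trajectories, whereas establishing stability requires bounds on \emph{truncated} signals; the contribution of Fetzer--Scherer over the classical continuous-time Megretski--Rantzer argument is precisely to handle this discrete-time truncation issue carefully, typically via a soft-IQC approximation or a time-domain dissipation reformulation of the frequency-domain inequality.
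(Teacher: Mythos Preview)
Your anticipation is correct: the paper does not prove this proposition at all. It is quoted directly from \cite[Thm.~2]{fetzer2017absolute} and immediately applied to derive \cref{thm:fdi}, with no intervening argument. Your sketch of the underlying homotopy argument (openness via a strict-margin small-gain step, closedness via a uniform $\ell_2$-gain bound from the IQC and Parseval, connectedness of $[0,1]$) is the standard Megretski--Rantzer proof shape, and your remark about the discrete-time truncation issue being the specific contribution of Fetzer--Scherer is accurate; but none of that appears in the present paper, so there is nothing further to compare.
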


Applying \cref{thm:iqc} therefore yields the following.

\begin{prop}\label{thm:fdi}
     Algorithm~\eqref{eq:system} is globally asymptotically stable for all $f\in\SmL$ if
     $\bigl(1-\tfrac{L+m}{2} g(z)\bigr)^{-1}$ is stable and
     the following frequency-domain inequality (FDI) holds:
    \begin{equation}\label{FDI}
        \bmat{g(z) \\ 1}^* \Pi_{m,L}(z) \bmat{g(z) \\ 1} < 0
        \quad\text{for all } z\in\torus,
    \end{equation}
    where $h(z)$ satisfies $\norm{h}_1\leq 1$ and $h_k\geq 0$ for all~$k\in\integer$.
\end{prop}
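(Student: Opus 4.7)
The plan is to deduce the statement as a direct application of the main discrete-time IQC result (\cref{thm:iqc}) to the loop-transformed interconnection of $\tilde G$ and $\tilde \Delta$ that the authors constructed just above. Concretely, I would verify the three hypotheses (i)--(iii) of \cref{thm:iqc} in turn, and then translate the conclusion back through the (invertible) loop transformation to recover global asymptotic stability of the original feedback between $G$ and $\df$.

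First, I would handle stability and well-posedness. The transfer function $\tilde g(z)=\frac{(L-m)g(z)/2}{1-(L+m)g(z)/2}$ has poles precisely at the zeros of $1-\tfrac{L+m}{2}g(z)$, so the standing hypothesis that $\bigl(1-\tfrac{L+m}{2}g(z)\bigr)^{-1}$ is stable guarantees that $\tilde G$ is stable and that the algebraic loop defining the feedback has a unique causal solution (well-posedness~(i)). Note that the pole of $g$ at $z=1$ (from the integrator in the algorithm) is cancelled by the loop transformation, which is exactly why the hypothesis is phrased on $(1-\tfrac{L+m}{2}g)^{-1}$ rather than on $g$ itself.

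Second, for hypothesis~(ii), I would argue that $\tau\tilde\Delta$ satisfies the IQC $\Pi_{-1,1}\otimes I_d$ for every $\tau\in[0,1]$. The operator $\tau\tilde\Delta$ is the shifted and scaled gradient of $\tau\tilde f + \tfrac{1-\tau}{2}(L+m)\|\cdot\|^2/2$ (up to the same affine transformation that took $\Delta$ to $\tilde\Delta$), which lies in $\mathcal{S}^1_{\tau m+(1-\tau)(L+m)/2,\,\tau L+(1-\tau)(L+m)/2}$; this sector is contained in $[-1,1]$ after the scaling by $2/(L-m)$, so the O'Shea--Zames--Falb IQC carries over using the same multiplier $h$ (this is the content of \cite[\S 2.4]{PEP} cited in the paragraph before the statement). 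In particular, $\tau=0$ (the zero operator) is trivially included, which is the reason for doing the loop transformation in the first place.

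Third, and this I expect to be the main technical step, I would show that the FDI appearing in hypothesis~(iii) for $(\tilde G,\Pi_{-1,1})$ is equivalent to the assumed FDI~\eqref{FDI} for $(g,\Pi_{m,L})$. Writing out $\bmat{\tilde g(z)\\1}^*\Pi_{-1,1}(z)\bmat{\tilde g(z)\\1}$ and substituting the definition of $\tilde g$, both numerator and denominator of $\tilde g$ share the common factor $1-\tfrac{L+m}{2}g(z)$; pulling this factor out yields $|1-\tfrac{L+m}{2}g(z)|^{-2}\bmat{g(z)\\1}^*M(z)\bmat{g(z)\\1}$ for some Hermitian $M(z)$, and a direct algebraic matching shows $M(z)=\Pi_{m,L}(z)$. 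Since $|1-\tfrac{L+m}{2}g(z)|^{-2}>0$ on $\torus$, the two FDIs have the same sign, so~\eqref{FDI} is exactly hypothesis~(iii). Having verified all three hypotheses, \cref{thm:iqc} gives stability of the transformed loop, which by invertibility of the loop transformation is equivalent to $\ell_2$-stability of the original feedback between $G$ and $\df$, and hence to global asymptotic stability of~\eqref{eq:system}. The main obstacle is the algebraic verification that the multiplier $\Pi_{-1,1}$ pulls back exactly to $\Pi_{m,L}$ under the loop transformation; once that identity is established, the rest of the argument is a routine bookkeeping application of~\cref{thm:iqc}.
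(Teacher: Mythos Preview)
Your proposal is correct and follows essentially the same route as the paper's proof: verify hypotheses (i)--(iii) of \cref{thm:iqc} for the loop-shifted pair $(\tilde G,\tilde\Delta)$ with multiplier $\Pi_{-1,1}\otimes I_d$, then pull the conclusion back through the loop transformation. The paper makes step~(iii) explicit via the constant matrix $M=\bigl[\begin{smallmatrix}(L-m)/2 & 0\\ -(L+m)/2 & 1\end{smallmatrix}\bigr]$ and the identity $M^\tp\Pi_{-1,1}M=\Pi_{m,L}$, which is exactly the ``pulls back to $\Pi_{m,L}$'' computation you anticipated; your treatment of~(ii) is in fact more detailed than the paper's, which simply asserts it is straightforward.
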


\begin{proof}
    The stability condition is equivalent to stability of~$\tilde G$. It is straightforward to verify that the interconnection of $\tilde G$ and $\tilde\Delta$ is well-posed and that $\tau\tilde\Delta$ satisfies the IQC $\Pi = \Pi_{-1,1}\otimes I_d$ for all $\tau\in[0,1]$. Therefore, the first two conditions in \cref{thm:iqc} hold for the transformed system $\tilde G$ and the IQC $\Pi$. It remains to show that the FDI in (iii) is equivalent to that in \eqref{FDI}. To that end, we first write the numerator and denominator of $\tilde g$ as
    \[
        \bmat{\tfrac{L-m}{2} g \\ 1 - \tfrac{L+m}{2} g} = \bmat{\frac{L-m}{2} & 0 \\ -\frac{L+m}{2} & 1} \bmat{g \\ 1} = M \bmat{g \\ 1}.
    \]
    Using this relationship along with $M^\tp \Pi_{-1,1} M = \Pi_{m,L}$, the FDI in (iii) of \cref{thm:iqc} is
    \[
        \bmat{\tilde g \\ 1}^* \Pi_{-1,1} \bmat{\tilde g \\ 1}
        = \frac{1}{|1-\tfrac{L+m}{2} g|^2} \bmat{g \\ 1}^* M^\tp \Pi_{-1,1} M \bmat{g \\ 1}.
    \]
    Therefore, the FDI in (iii) is equivalent to that in~\eqref{FDI}. 
    From \cref{thm:iqc}, the interconnection of $\tilde G$ and $\tilde \Delta$ is stable, which via loop shifting implies the interconnection of $G$ and $\Delta$ is stable. Finally, (input-output) stability means that all signals have bounded norms. Therefore, $\norm{\tilde x} <\infty \implies \lim_{k\to\infty} \tilde x_k = 0 \implies \lim_{k\to\infty} x_k = x_\star$.
\end{proof}

We use \cref{thm:fdi} with $h(z) = z^{-1}$ to show that C2M is convergent by directly verifying the FDI \eqref{FDI}. First, the condition that $\bigl(1-\tfrac{L+m}{2} g(z)\bigr)^{-1}$ is bounded follows from the root locus argument in \cref{sec:rootlocus}; see the following \cref{sec:local} for a more rigorous argument. Letting $z = x + i \sqrt{1-x^2}$ for $x\in[-1,1]$ and substituting the C2M parameters, it is straightforward to verify that the FDI is satisfied when $\rho=\frac{\sqrt{\kappa}-1}{\sqrt{\kappa}+1}$ and $\kappa < 9+4\sqrt{5}$. In the other case with $\kappa > 9+4\sqrt{5}$, the FDI reduces to the inequality
\begin{multline}\label{FDI2}
    0 > -4 \rho \left(\kappa(1-\rho)^2-(1+\rho)\right)x^2  \\
    -2 (1-\rho) \left(\kappa (1-\rho)^2(1+2\rho)-(1+\rho)^2\right)x\\
    -(1+\rho) \left(\kappa (1-\rho)^2(1-4\rho+\rho^2)+6 \rho-2 \rho^3\right).
\end{multline}
The right-hand side of \eqref{FDI2} is a quadratic in $x$. To show that this inequality holds, we will use the following.

\begin{lem}\label{lem:ineqs}
    Suppose $\rho \in [0,1]$ and $\kappa > 1$. Then,
    \begin{align*}
        \rho > \frac{\sqrt{\kappa}-1}{\sqrt{\kappa}+1} \quad&\iff\quad \kappa(1-\rho)^2 < (1+\rho)^2, \\
        \rho < 1-\sqrt{\frac{2}{\kappa}}\quad&\iff\quad \kappa(1-\rho)^2 > 2.
    \end{align*}
\end{lem}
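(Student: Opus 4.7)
The plan is to prove each equivalence by a monotonic transformation (taking square roots of non-negative quantities, then linearly rearranging). For the hypotheses $\rho\in[0,1]$ and $\kappa>1$, all the quantities that need to be squared/square-rooted are non-negative, so equivalence is preserved at each step. I will handle the two equivalences independently.

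For the first equivalence, I would start from $\kappa(1-\rho)^2<(1+\rho)^2$. Since $1-\rho\ge 0$ and $1+\rho\ge 0$, both $\sqrt{\kappa}\,(1-\rho)$ and $1+\rho$ are non-negative, so squaring is a monotone bijection on $[0,\infty)$ and the inequality is equivalent to $\sqrt{\kappa}\,(1-\rho)<1+\rho$. Rearranging gives $\sqrt{\kappa}-1<\rho(\sqrt{\kappa}+1)$, and dividing by $\sqrt{\kappa}+1>0$ gives $\rho>\frac{\sqrt{\kappa}-1}{\sqrt{\kappa}+1}$, as desired.

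For the second equivalence, I would start from $\kappa(1-\rho)^2>2$. Again both $\sqrt{\kappa}\,(1-\rho)\ge 0$ and $\sqrt{2}>0$, so squaring is monotone and the inequality is equivalent to $\sqrt{\kappa}\,(1-\rho)>\sqrt{2}$, i.e., $1-\rho>\sqrt{2/\kappa}$, which gives $\rho<1-\sqrt{2/\kappa}$. (Note the boundary case $\rho=1$ is harmless: the inequality $\kappa(1-\rho)^2>2$ fails and $\rho<1-\sqrt{2/\kappa}$ also fails since $\kappa>1$ makes $1-\sqrt{2/\kappa}<1$ only when $\kappa>2$, but in all regimes both sides of the equivalence are simultaneously false when $\rho=1$.)

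There is no real obstacle here: the only subtlety is verifying that the quantities being squared are non-negative, which follows immediately from $\rho\in[0,1]$ and $\kappa>1$. Because every step is a reversible monotone transformation, the chains of equivalences can be run in either direction.
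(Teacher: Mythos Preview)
Your proof is correct; the paper states this lemma without proof, treating it as elementary, and your argument via monotone square-root transformations is exactly the natural one. One small quibble: the parenthetical remark about the boundary $\rho=1$ contains a slip (in fact $1-\sqrt{2/\kappa}<1$ for \emph{all} $\kappa>1$, not only $\kappa>2$), but this aside is unnecessary anyway since your main chain of equivalences already covers $\rho=1$ without special-casing.
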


Since $\rho < 1-\sqrt{\tfrac{2}{\kappa}}$ by assumption, it follows from \cref{lem:ineqs} that $\kappa(1-\rho)^2 > 2 > 1+\rho$. Therefore, the leading coefficient of the quadratic is negative. Maximizing the right-hand side of \eqref{FDI2}, this inequality holds if
\[
    \frac{p(\kappa,\rho)}{4\rho(\kappa(1-\rho)^2-(1+\rho))} < 0,
\]
where $p(\kappa,\rho)$ is the polynomial in \eqref{bigpoly}. The denominator is positive from the prior argument. Moreover, $p(\kappa,\rho)$ is negative for any $\rho \in \bigl(\rho_\text{C2M},1-\sqrt{\tfrac{2}{\kappa}}\bigr]$ by \cref{lem:sturm}, so the FDI is satisfied. Therefore, C2M is globally asymptotically stable for any $\rho$ satisfying~\eqref{eq:rho_def}.

\subsection{Local Convergence}\label{sec:local}

While the root locus interpretation
provides intuition behind the local convergence of C2M, we now use Lyapunov's indirect method along with the Jury criterion to systematically prove local convergence; see~\cite{mohammadi2025tradeoffs} for similar analyses in other settings. We begin by characterizing the worst-case root convergence factor in terms of the system matrices.

\begin{lem}\label{lem:spectral-radius}
    The worst-case root-convergence factor of the algorithm~\eqref{eq:system} over the function class $\SSmL$ is the maximum spectral radius of $A+qBC$ over $q\in[m,L]$.
\end{lem}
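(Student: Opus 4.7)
The plan is to combine the global asymptotic stability just established with the discrete-time Lyapunov indirect method~\cite[\S 9.2]{ortega1970}, so that the asymptotic convergence rate on each trajectory is controlled by the spectral radius of the Jacobian of the shifted dynamics~\eqref{eq:system_shifted} at the equilibrium. Global stability ensures that every trajectory eventually enters any neighborhood of $x_\star$ on which the linearization is a valid first-order approximation.

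First I would identify the Jacobian. Writing the state as $\xi_k = \bmat{\tilde x_k^\tp & \tilde x_{k-1}^\tp}^\tp \in \real^{2d}$ and using the $C^2$ expansion $\dft(\tilde y_k) = Q\,\tilde y_k + o(\norm{\tilde y_k})$ with $Q \defeq \nabla^2 f(x_\star)$, the iteration takes the form $\xi_{k+1} = \mathcal{A}_f\,\xi_k + o(\norm{\xi_k})$, where $\mathcal{A}_f = A \otimes I_d + (B \otimes I_d)\,Q\,(C \otimes I_d)$. Since $Q$ is symmetric with spectrum contained in $[m,L]$, the orthogonal change of basis $I_2 \otimes U$, in which $Q = U\Lambda U^\tp$ with $\Lambda = \mathrm{diag}(q_1,\dots,q_d)$, block-diagonalizes $\mathcal{A}_f$ into the $2\times 2$ blocks $A + q_i BC$. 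Hence $\rho(\mathcal{A}_f) = \max_i \rho(A + q_i BC) \leq \max_{q\in[m,L]} \rho(A+qBC)$, where $\rho(\cdot)$ denotes spectral radius.

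The upper bound on the root-convergence factor then follows from Lyapunov's indirect method: since $\rho(\mathcal{A}_f) < 1$ and the nonlinear remainder is $o(\norm{\xi_k})$, every convergent trajectory satisfies $\limsup_{k\to\infty}\norm{\tilde x_k}^{1/k} \leq \rho(\mathcal{A}_f) \leq \max_{q\in[m,L]} \rho(A+qBC)$. Taking the supremum over $f \in \SSmL$ yields the desired upper bound.

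For the matching lower bound I would specialize to quadratic objectives: for each $q\in[m,L]$, the function $f(x) = \tfrac{q}{2}\norm{x - x_\star}^2 \in \QmL \subset \SSmL$ produces an iteration that is \emph{exactly} linear per eigen-direction with closed-loop matrix $A + qBC$; initializing along an eigenvector of the dominant eigenvalue gives $\norm{x_k - x_\star}^{1/k} \to \rho(A + qBC)$. Supremizing over $q\in[m,L]$ matches the upper bound. The main subtlety is the classical estimate $\limsup\norm{\xi_k}^{1/k} \leq \rho(\mathcal{A}_f)$ under an $o(\norm{\xi_k})$ perturbation, which is exactly the step that requires $f \in C^2$ (so that the linearization is a bona fide first-order approximation) in addition to the already-established global convergence.
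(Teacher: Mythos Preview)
Your proposal is correct and takes essentially the same route as the paper: both invoke the linear convergence theorem \cite[Thm.~10.1.4]{ortega1970} to identify the root-convergence factor with the spectral radius of the linearization $A\otimes I_d + (BC)\otimes Q$, then diagonalize the Hessian $Q$ to reduce to the $2\times 2$ blocks $A+q_iBC$ with $q_i\in[m,L]$. Your write-up is actually more careful than the paper's in two places the paper leaves implicit---you explicitly supply the matching lower bound via quadratics in $\QmL$, and you flag that global asymptotic stability is what allows the indirect method to govern \emph{every} trajectory rather than only those initialized near $x_\star$.
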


\begin{proof}
    From the linear convergence theorem~\cite[Thm. 10.1.4]{ortega1970}, the root-convergence factor of the algorithm~\eqref{eq:system} is the spectral radius of its linearization evaluated at the equilibrium. In particular, let $\{x_k\}$ denote the sequence produced by applying algorithm~\eqref{eq:system} to a function $f\in\SSmL$ for some initial conditions $x_0,x_{-1}\in\real^d$. Let $Q$ denote the Hessian of $f$ evaluated at the minimizer of $f$. Then the root-convergence factor of the sequence $\{x_k\}$ is the spectral radius of the linearization $A\otimes I_d + (BC)\otimes Q$, where $\otimes$ denotes the Kronecker product. Since $Q$ is real and symmetric, it is diagonalizable. Applying this diagonalization to the linearized system yields $A\otimes I_d+(BC)\otimes\text{diag}(q_1,\ldots,q_d)$, where $q_1,\ldots,q_d$ are the eigenvalues of $Q$. Therefore, the worst-case root-convergence factor over the function class $\SSmL$ is the maximum spectral radius of $A+qBC$ over $q\in[m,L]$.
\end{proof}

Based on \cref{lem:spectral-radius}, we can characterize the worst-case root-convergence factor using the eigenvalues of $A+qBC$. We next analyze these eigenvalues using the Jury criterion.
Recall that a polynomial $z^2 + a_1\,z + a_0$ with real coefficients has roots in the closed unit disk if and only if \cite[\S 4.5]{fadali-2009}\footnote{The reference states the results for the roots to be contained in the \textit{open} unit disk, which is described by the corresponding \textit{strict} inequalities. Since the roots of a polynomial depend continuously on its coefficients, the corresponding result for the closed unit disk holds with non-strict inequalities.}
\begin{equation}\label{eq:jury}
    1 + a_1 + a_0 \geq 0, \quad 1 - a_1 + a_0 \geq 0, \quad |a_0| \leq 1.
\end{equation}
The characteristic polynomial of the closed-loop system matrix $A+qBC$ is the quadratic $\chi(z) = z^2 + (q\alpha(1+\eta)-(1+\beta)) z + (\beta-q\alpha\eta)$. Applying the Jury criterion~\eqref{eq:jury} to the scaled polynomial $\chi(\rho z)$, the closed-loop eigenvalues are in the closed $\rho$-disk if and only if
\begin{gather*}
    (1-\rho)(\beta-\rho) + \alpha\,(\eta\rho - \eta + \rho) q \geq 0, \\
    (1+\rho)(\beta+\rho) - \alpha\,(\eta\rho + \eta + \rho) q \geq 0, \\
    \rho^2 + \beta - \alpha\eta q \geq 0, \qquad \rho^2 - \beta + \alpha\eta q \geq 0,
\end{gather*}
for all $q\in[m,L]$. Since each inequality is linear in $q$, it suffices to enforce the inequality at the endpoints ${q\in\{m,L\}}$. Substituting the C2M parameters, this system of inequalities reduces to
\[
    \frac{\sqrt{\kappa}-1}{\sqrt{\kappa}+1} \leq \rho \leq \frac{\kappa-1}{\kappa+1}.
\]
The lower bound on $\rho$ is the minimax rate for~$\QmL$.
Since all parameters $\rho$ in~\eqref{eq:rho_def} satisfy these conditions, we have that all eigenvalues of $A+qBC$ are in the $\rho$-disk. Therefore, from \cref{lem:spectral-radius}, the parameter $\rho$ is the worst-case root-convergence factor of C2M, which completes the proof of \cref{thm:c2m}.

\subsection{Iteration Complexity}

It is common in optimization to characterize algorithm convergence using \emph{iteration complexity} \cite[\S1.1.2]{nesterov2018lectures}. Iteration complexity is an expression for how the worst-case number of iterations $N$ required to reach a specified error $\epsilon$ scales as a function of problem parameters such as $\kappa$, expressed asymptotically as $\epsilon\to 0$ and $\kappa\to\infty$.
If the convergence rate is $\rho$ as defined in \cref{def:r_conv}, then $\norm{x_k-x_\star} \leq c(k) \rho^k$, where $c(k)$ grows sub-exponentially in $k$. We seek the smallest $N$ such that $c(N) \rho^N \leq \epsilon$. Rearranging, we obtain $\log c(N) + N \log\rho \leq \log\epsilon$. Since $c(N)$ is sub-exponential, it is dominated by the linear term in $N$ as $\epsilon\to 0$ (and therefore $N\to\infty$), so we neglect it. We are left with $N \geq \frac{-1}{\log \rho}\log\frac{1}{\epsilon}$. Next, we expand $\frac{-1}{\log\rho}$ as a function of $\kappa\to\infty$, keeping only the most significant term. For example, if $\rho = 1-\frac{c}{\sqrt{\kappa}}$,
\[
   \frac{-1}{\log\rho} = \frac{-1}{\log\bigl(1-\frac{c}{\sqrt\kappa}\bigr)}
    = \frac{\sqrt\kappa}{c} -\frac{1}{2}-\frac{c}{12\sqrt\kappa}
    -\cdots \approx \frac{\sqrt\kappa}{c}.
\]
Based on the minimax rate of TM in \cref{table:params} ($c=1$), we conclude that
$N_\text{TM} \gtrsim \sqrt\kappa \log\tfrac{1}{\epsilon}$. Similarly, for HB, we have
$\frac{\sqrt\kappa-1}{\sqrt\kappa+1} = 1-\frac{2}{\sqrt\kappa+1} \approx 1-\frac{2}{\sqrt\kappa}$ ($c=2$), so $N_\text{HB} \gtrsim \frac{\sqrt\kappa}{2} \log\tfrac{1}{\epsilon}$.

For C2M, we do not have a nice expression for $\rho_\text{C2M}$, but we can nevertheless find an asymptotic analytic expansion for it about $\kappa\to\infty$, which leads to the bounds
\begin{equation*}
    1-\sqrt{\frac{2}{\kappa}}-\frac{1+2\sqrt{2}}{4\kappa}
    <
    \rho_\text{C2M}
    <   
    1-\sqrt{\frac{2}{\kappa}}.
\end{equation*}
Therefore, $c=\sqrt{2}$ and 
$N_\text{C2M} \gtrsim \frac{\sqrt\kappa}{\sqrt{2}} \log\tfrac{1}{\epsilon}$.
In other words, C2M is faster than TM by a factor of $\sqrt{2}$.

In contrast, GD has iteration complexity $N_\text{GD} \gtrsim \tfrac{\kappa}{2} \log\frac{1}{\epsilon}$. In the optimization literature, methods with the $\sqrt{\kappa}$ factor instead of merely $\kappa$ are called \emph{accelerated methods}. We can visualize iteration complexity by plotting $\frac{-1}{\log\rho}$ versus $\kappa$ on a log-log scale (we omit the $\log\frac{1}{\epsilon}$ factor); see \cref{fig:iteration-complexity}.
We also included a plot for GD (see \cref{table:params}).

We see in \cref{fig:iteration-complexity} that non-accelerated methods (GD, GAG) have an asymptotic slope of $1$ whereas accelerated methods (C2M, TM) have an asymptotic slope of $\frac12$.

\begin{figure}[ht]
    \centering
    \includegraphics[width=\linewidth]{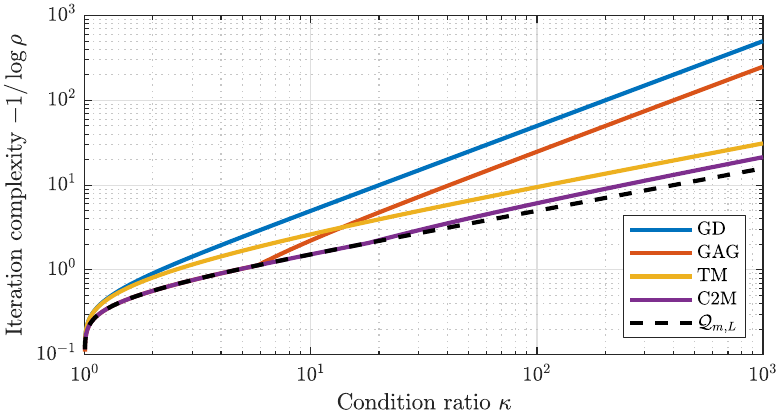}
    \caption{Iteration complexity of several iterative methods applied to $\SSmL$. The proposed C2M method outperforms TM \cite{tmm}, which is minimax optimal on $\SmL$, by exploiting a faster local convergence rate. Similarly, GAG \cite{Petersen2} outperforms GD, which is minimax optimal on $\FmL$.}
    \label{fig:iteration-complexity}
\end{figure}

\section{NUMERICAL VALIDATION}\label{sec:numerical}

We simulate our proposed algorithm C2M along with several other first-order methods on a function chosen to showcase worst-case behavior. We used the function \cite[\S IV]{tmm}
\[
    f(x) = (L-m)\sum_{i=1}^p g(a_i^\tp x-b_i)+\frac{m}{2}\norm{x}^2,
\]
where $g(w)$ is $\frac{1}{2}w^2 e^{-r/w}$ if $w>0$ and zero if $w\leq 0$.
When $r>0$ and $0<m\leq L$ and
${\norm{\bmat{a_1 & \cdots & a_p}}=1}$, such functions satisfy $f\in \SSmL$.
We chose the parameters $L=1$, $m=10^{-3}$, $r=10^{-3}$, $p=2$,
$a_1=(1,0)$, $a_2=(0,0.002)$, and $b_1=b_2=100$. All methods were initialized at $x_0=0$.

In \cref{fig:simulations}, we plot error as a function of iteration. The function $f$ elicits worst-case behavior from GD, HB, and TM. In other words, GD and TM converge at their respective minimax rates for $\FmL$ and $\SmL$. Since $f \notin \QmL$, HB is only locally convergent. In our simulation, we see that HB does not converge; however, if we were to initialize HB sufficiently close to $x_\star$, then it would converge at least as fast as the minimax rate for $\QmL$.
Our proposed C2M exploits additional smoothness in the objective to converge globally at a rate that is always faster than the minimax $\SmL$ rate.
Likewise, GAG, which is globally convergent on $\FmL$, is slightly faster than GD, which is minimax-optimal on $\FmL$.

\begin{figure}[ht]
    \centering
    \includegraphics[width=\linewidth]{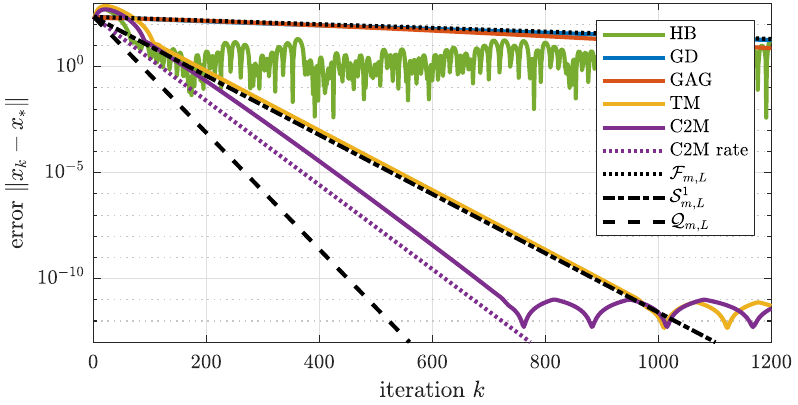}
    \caption{Simulation results for a function $f\in\SSmL$ (see \cref{sec:numerical}). Solid lines are simulation results for the specified method; black lines are minimax rates for different function classes (see \cref{table:params}); the dotted purple line is our theoretical upper bound (worst-case) rate for C2M.
    }
    \label{fig:simulations}
\end{figure}

\vspace{-2mm}

\section{DISCUSSION}\label{sec:discussion}

The proposed C2M algorithm is the first method, to the best of the authors' knowledge, that is designed specifically for the function class $\SSmL$. The minimax rate for this function class, however, is not known, in contrast to the function classes $\SmL$ and $\QmL$. Finding this minimax rate or even lower bounds are interesting open problems.

The parameters of C2M are related to two other algorithms from the literature. As we have already seen, C2M reduces to HB when $\rho=\frac{\sqrt{\kappa}-1}{\sqrt{\kappa}+1}$. Moreover, the general C2M parameters are identical (after appropriate transformations) to those of GAG~\cite[Cor. 1.1]{Petersen2}. This makes sense, since the work \cite{Petersen2} also considers the family of algorithms \eqref{eq:system} and is optimizing for local convergence.  
The two cases differ, however, in the choice of $\rho$, since GAG is optimized over the function class $\FmL$ defined in \cref{sec:intro} rather than $\SSmL$.

\appendix 

\subsection{\texorpdfstring{Proof of \cref{lem:sturm}}{Proof of Lemma}}\label{app:sturm}

We apply Sturm's theorem \cite[Thm. 2.62]{basu2007algorithms} to $p(\kappa,\rho)$ as a polynomial in $\rho$. Define the Sturm sequence
\[
    p_0 = p, \quad p_1 = \frac{\d p}{\d\rho}, \quad p_{i+1} = -\text{rem}(p_{i-1},p_i) \text{ for }i\geq 1,
\]
where $\text{rem}()$ denotes the remainder after polynomial division (considered as polynomials in $\rho$), and the sequence terminates when $p_i$ is constant, which occurs for $i\leq 7$ since $p$ is degree~$7$ in~$\rho$. Evaluating the Sturm sequence at $\rho=0$ and $\rho=1$ yields 5 sign changes and 3 sign changes, respectively. Therefore, there are two real roots in the interval $(0,1)$. Moreover, $p$ is positive when $\rho = \frac{\sqrt{\kappa}-1}{\sqrt{\kappa}+1}$, negative when $\rho = 1-\sqrt{\tfrac{2}{\kappa}}$, and positive when $\rho = 1$. By the intermediate value theorem, we conclude that there is exactly one real root in each interval $\bigl(\frac{\sqrt{\kappa}-1}{\sqrt{\kappa}+1},1-\sqrt{\tfrac{2}{\kappa}}\bigr)$ and $\bigl(1-\sqrt{\tfrac{2}{\kappa}},1\bigr)$, and the value of $p$ is negative for all $\rho\in\bigl(\rho_\text{C2M},1-\sqrt{\tfrac{2}{\kappa}}\bigr]$.\hfill\QED

\bibliographystyle{IEEEtran}
\bibliography{refs_abbrv}

\begin{thebibliography}{10}
\providecommand{\url}[1]{#1}
\csname url@samestyle\endcsname
\providecommand{\newblock}{\relax}
\providecommand{\bibinfo}[2]{#2}
\providecommand{\BIBentrySTDinterwordspacing}{\spaceskip=0pt\relax}
\providecommand{\BIBentryALTinterwordstretchfactor}{4}
\providecommand{\BIBentryALTinterwordspacing}{\spaceskip=\fontdimen2\font plus
\BIBentryALTinterwordstretchfactor\fontdimen3\font minus \fontdimen4\font\relax}
\providecommand{\BIBforeignlanguage}[2]{{%
\expandafter\ifx\csname l@#1\endcsname\relax
\typeout{** WARNING: IEEEtran.bst: No hyphenation pattern has been}%
\typeout{** loaded for the language `#1'. Using the pattern for}%
\typeout{** the default language instead.}%
\else
\language=\csname l@#1\endcsname
\fi
#2}}
\providecommand{\BIBdecl}{\relax}
\BIBdecl

\bibitem{drori_taylor_complexity}
Y.~Drori and A.~Taylor, ``On the oracle complexity of smooth strongly convex minimization,'' \emph{J. Complexity}, vol.~68, p. 101590, 2022.

\bibitem{PEP}
A.~B. Taylor, J.~M. Hendrickx, and F.~Glineur, ``Smooth strongly convex interpolation and exact worst-case performance of first-order methods,'' \emph{Math. Program.}, vol. 161, pp. 307--345, 2017.

\bibitem{iqcopt}
L.~Lessard, B.~Recht, and A.~Packard, ``Analysis and design of optimization algorithms via integral quadratic constraints,'' \emph{SIAM J. Optim.}, vol.~26, no.~1, pp. 57--95, 2016.

\bibitem{dissalg}
L.~Lessard, ``{The analysis of optimization algorithms: A dissipativity approach},'' \emph{IEEE Control Syst. Mag.}, vol.~42, no.~3, pp. 58--72, Jun. 2022.

\bibitem{scherer_optalg}
S.~Michalowsky, C.~Scherer, and C.~Ebenbauer, ``Robust and structure exploiting optimisation algorithms: An integral quadratic constraint approach,'' \emph{Int. J. Control}, vol.~94, no.~11, pp. 2956--2979, 2021.

\bibitem{scherer_synthesis_siam}
C.~Scherer and C.~Ebenbauer, ``Convex synthesis of accelerated gradient algorithms,'' \emph{SIAM J. Control Optim.}, vol.~59, no.~6, pp. 4615--4645, 2021.

\bibitem{algosyn_acc}
L.~Lessard and P.~Seiler, ``Direct synthesis of iterative algorithms with bounds on achievable worst-case convergence rate,'' in \emph{Proc. Amer. Control Conf.}, Jul. 2020, pp. 119--125.

\bibitem{tmm}
B.~Van~Scoy, R.~A. Freeman, and K.~M. Lynch, ``The fastest known globally convergent first-order method for minimizing strongly convex functions,'' \emph{IEEE Control Syst. Lett.}, vol.~2, no.~1, pp. 49--54, 2017.

\bibitem{ITEM}
A.~Taylor and Y.~Drori, ``An optimal gradient method for smooth strongly convex minimization,'' \emph{Math. Program.}, vol. 199, no.~1, pp. 557--594, 2023.

\bibitem{nemirovsky1992information}
A.~S. Nemirovsky, ``Information-based complexity of linear operator equations,'' \emph{J. Complexity}, vol.~8, no.~2, pp. 153--175, 1992.

\bibitem{nesterov2018lectures}
Y.~Nesterov, \emph{Lectures on convex optimization}.\hskip 1em plus 0.5em minus 0.4em\relax Springer, 2018, vol. 137.

\bibitem{hb}
B.~T. Polyak, ``Some methods of speeding up the convergence of iteration methods,'' \emph{USSR Comput. Math. Math. Phys.}, vol.~4, no.~5, pp. 1--17, 1964.

\bibitem{Petersen1}
V.~Ugrinovskii, I.~Petersen, and I.~Shames, ``Global convergence and asymptotic optimality of the heavy ball method for a class of nonconvex optimization problems,'' \emph{IEEE Control Syst. Lett.}, vol.~6, pp. 2449--2454, 2022.

\bibitem{Petersen2}
A.~X. Wu, I.~R. Petersen, V.~Ugrinovskii, and I.~Shames, ``A generalized accelerated gradient optimization method,'' in \emph{Proc. Amer. Control Conf.}, 2024, pp. 1904--1908.

\bibitem{ortega1970}
J.~Ortega and W.~Rheinboldt, \emph{Iterative solution of nonlinear equations in several variables}.\hskip 1em plus 0.5em minus 0.4em\relax Academic Press, 1970.

\bibitem{megretski_rantzer}
A.~Megretski and A.~Rantzer, ``System analysis via integral quadratic constraints,'' \emph{IEEE Trans. Autom. Control}, vol.~42, no.~6, pp. 819--830, 1997.

\bibitem{fetzer2017absolute}
M.~Fetzer and C.~W. Scherer, ``Absolute stability analysis of discrete time feedback interconnections,'' \emph{IFAC-PapersOnLine}, vol.~50, no.~1, pp. 8447--8453, 2017.

\bibitem{zames-falb-qp}
W.~Heath and A.~Wills, ``{Zames--Falb} multipliers for quadratic programming,'' in \emph{Proc. IEEE Conf. Decis. Control}, 2005, pp. 963--968.

\bibitem{khalil}
H.~K. Khalil, \emph{\BIBforeignlanguage{English}{Nonlinear systems}}.\hskip 1em plus 0.5em minus 0.4em\relax Upper Saddle River, N.J.: Prentice Hall, 2002.

\bibitem{mohammadi2025tradeoffs}
H.~Mohammadi, M.~Razaviyayn, and M.~R. Jovanović, ``Tradeoffs between convergence rate and noise amplification for momentum-based accelerated optimization algorithms,'' \emph{IEEE Trans. Autom. Control}, vol.~70, no.~2, pp. 889--904, 2025.

\bibitem{fadali-2009}
M.~Fadali and A.~Visioli, \emph{Digital Control Engineering: Analysis and Design}.\hskip 1em plus 0.5em minus 0.4em\relax Academic Press, 2009.

\bibitem{basu2007algorithms}
S.~Basu, R.~Pollack, and M.~Coste-Roy, \emph{Algorithms in Real Algebraic Geometry}, ser. Algorithms Comput. Math.\hskip 1em plus 0.5em minus 0.4em\relax Springer, 2007.

\end{thebibliography}

\end{document}